   \newcommand*{\ud}{\hspace{0.1cm}\mathrm{d}}
   \newcommand{\gf}{\mathcal{G}}
   \newcommand{\bR}{\mathbb{R}}
   \newtheorem{theorem}{Theorem}[section]
   \newtheorem{lemma}[theorem]{Lemma}
   \newtheorem{definition}[theorem]{Definition}
   \newtheorem{corollary}[theorem]{Corollary}
   \newtheorem{proposition}[theorem]{Proposition}
   \newtheorem{remark}[theorem]{Remark}
   \numberwithin{equation}{section}
   \def\!{\mskip-0.6\thinmuskip}
   \newcommand{\snorm}[1]{% snorm = single norm 
   \bgroup\left\vert#1\right\vert\egroup}
   \newcommand{\dnorm}[1]{% dnorm = double norm 
   \bgroup\left\vert\!\left\vert#1\right\vert\!\right\vert\egroup}
   \newcommand{\tnorm}[1]{% tnorm = triple norm
   \bgroup\left\vert\!\left\vert\!\left\vert#1\right\vert\!\right\vert\!\right\vert\egroup}
   \newcommand{\iprod}[2]{\bgroup\left<#1 , #2\right>\egroup}
   \newcommand{\markred}[1]{\bgroup\color{red}#1\egroup}
   \newcommand{\markblue}[1]{\bgroup\color{blue}#1\egroup}
   \def\ps@pprintTitle{%
   \let\@oddhead\@empty
   \let\@evenhead\@empty
   \def\@oddfoot{}%
   \let\@evenfoot\@oddfoot}
\begin{document}

\begin{frontmatter}
\title{ 
\textbf{
Nonlinear sequential fractional boundary value problems involving generalized $\psi$-Caputo fractional derivatives
}
}	

\author[affil1]{Nguyen Minh Dien \corref{corresponding-author}}
\cortext[corresponding-author]{Corresponding author}
\ead{diennm@tdmu.edu.vn; nmdien81@gmail.com}

\address[affil1]{Faculty of Education, Thu Dau Mot University, Binh Duong Province, Vietnam}

\begin{abstract}
This paper is devoted to study the nonlinear sequential fractional boundary value problems involving generalized $\psi$-Caputo fractional derivatives with nonlocal boundary conditions. We investigate the Green function and some of its properties, from which we obtain a new Lyapunov-type inequality for our problem. A lower bound for the possible eigenvalues of our problem is derived. Furthermore, we apply some properties of the Green function to obtain some existence results for our problem. It is worth mentioning that our results still work with some source functions including singularities.
\end{abstract} 

\begin{keyword}
Nonlinear differential equations; boundary value problems; fractional derivatives
\MSC[2020] 26A33; 26D10; 34A08; 34L15
\end{keyword}
\end{frontmatter}

\section{Introduction}

\subsection{Statement of the problem}
Let $a<b$, $0<\alpha, \beta \le 1$ with $\alpha+\beta>1$, and let $\psi$ be an appropriate function.
We consider the sequential fractional boundary value problem
involving generalized $\psi$-Caputo fractional derivatives
\begin{equation}
\label{main}
\left(
   {^C}D_{a+}^{\alpha, \psi} 
   \,
   {^C}D_{a+}^{\beta, \psi}
   x
\right)
(t)
+
f(t, x(t))
=0, \ \  a<t<b
\end{equation}
subject to the generalized nonlocal conditions 
\begin{equation}
\label{BV}
x(a)=0, \ x(b)=\gf(x)
,
\end{equation}
where ${^C}D_{a+}^{\alpha, \psi}$, ${^C}D_{a+}^{\beta, \psi}$ are the left $\psi$-Caputo fractional derivatives.
~
~
Our study focuses on discussing the following aspects: 
the Green function and some of its properties, 
the Lyapunov-type inequality, 
and some existence results for our problem.

\subsection{Relevant works and motivations}

Fractional calculus is a generalization of ordinary differentiation and integration, in which derivatives and integrals of arbitrary real or complex order are defined.
This field has became a rapidly growing area and has found applications in diverse research fields such as 
in physics \cite{Her14}, fractional dynamics \cite{Tar10}, quantum mechanics \cite{Iom19}, bioengineering \cite{Mag06}.
Particularly, one can gain a comprehensive view of applications of the fractional derivatives in physics from the excellent monograph \cite{Uch13} and the references therein.
~
~
Definition of fractional derivatives are very rich such as Caputo, Caputo-Katugampola, Riemann-Liouville, Hadamard, etc.
The concept of fractional derivatives of a function with respect another function was introduced in \cite{KST06}.
Based on this ideas, Almeida \cite{Alm17} presented the $\psi$-Caputo fractional derivatives which is generalized from the mentioned fractional derivatives.

Sequential fractional differential equations were firstly introduced in the literature in the book by Miller and Ross \cite{MR93}, in which the authors defined compositions of Riemann-Liouville fractional derivatives
$
\prod_{i=1}^n D^\alpha = \underbrace{D^\alpha D^\alpha..D^\alpha}_{n \ times}
$,
where $n \in \mathbb{N}$ and $\alpha$ is a positive real number. 
They also investigated some linear differential equations involving these compositions.

In the past few years, there are numerous works on the sequential fractional boundary value problems
such as \cite{ABR21, AAB21, Dien-Fractals-21, Dien-RMJ-21, Dien-JIEA-20, GGOR20, FN18, Fer19, Fer20, KT21, UBA20, ZL19} and the references therein.
~
~
In fact, 
Ferreira \cite{Fer19} and Zhang al et \cite{ZL19} derived some Lyapunov-type inequalities for linear sequential fractional boundary value problems involving Caputo fractional derivatives and Hilfer fractional derivatives, respectively.
~
~
Fazli and Nieto \cite{FN18} were obtained some existence and uniqueness results for the nonlinear initial value problems with Riemann-Liouville fractional derivatives $D^{2 \alpha}=D^{\alpha}D^{\alpha}$.
Kassymov and Torebek \cite{KT21} obtained a Lyapunov-type and a Hartman-Wintner-type inequalities for a
nonlinear fractional hybrid equation with left Riemann-Liouville and right Caputo fractional derivatives.
Ferreira \cite{Fer20} also obtained a Lyapunov-type inequality and a existence result for a nonlinear fractional derivatives with Riemann-Liouville and the Caputo fractional derivatives.
~
~
Very recently, in \cite{Dien-Fractals-21, Dien-RMJ-21, Dien-JIEA-20}, we investigated the existence and continuity results for the fractional Langevin problems involving $\psi$-Caputo fractional derivatives with weakly singular sources.

To the best of our knowledge, Lyapunov-type inequalities for nonlinear sequential fractional differential equations involving $\psi$-Caputo fractional derivatives associated with generalized nonlocal conditions are still not considered. Furthermore, the existence results for the problem including some singularities in the source function are under consideration. 
~
~
Motivated by the above discussions, in this paper, we consider the sequential fractional boundary value problem (\ref{main}) and (\ref{BV}). 
In our work, the main new features are that:

\begin{itemize}

\item[•]
We establish the Green function and investigate some of its properties.
It is worth noting that the Green function in the present paper is different from the Green function in recent papers.

\item[•]
We derive a new Lyapunov-type inequality for our problem. 

\item[•]
We obtain some existence results for our problem.
Unlike previous papers, our obtained results still work with some source functions including singularities.
\end{itemize}

\subsection{Outline}

The present paper is organized as follows.
In section 2, we introduce the reader to some basic concepts of fractional integral and $\psi$-Caputo fractional derivative. Some well-known results are also recalled before proceeding to the main results.
In section 3, we establish the Green function and some of its properties.
In section 4, we present a new Lyapunov-type inequality and some existence results for our problem.

\section{Preliminaries}
We begin by setting some notations.
For convenience, we denote the class of increasing and differentiable functions on $[a, b]$ by $C^1_+[a, b]$, i.e.,
\[
C^1_+[a, b]
=
\{\psi \in C^1[a, b]: \psi'(t)>0 \ \text{for all} \ t \in [a, b]\}
.
\]
For $z$ belong to $C([a, b], \bR)$, we define by $\dnorm{z}=\sup_{a \le t \le b}|z(t)|$.
We also denote the set of non-negative real numbers by $\bR_+$.

To present the concept of $\psi$-Caputo fractional derivative, we start with the definition of fractional integral
of a function with respect to another function.

\begin{definition}[see \cite{Alm17, KST06}]
For $\alpha>0$, $\psi \in C^1_+[a, b]$, and $x \in L^1[a, b]$, 
the left fractional integral of a function $x$ depending another function $\psi$ is
given by 
\[
I_{a+}^{\alpha, \psi} x(t)
=
\frac{1}{\Gamma(\alpha)}
\int_a^t 
   \psi'(\tau)(\psi(t)-\psi(\tau))^{\alpha-1} 
   x(\tau)
\ud \tau
,
\]
where $\Gamma(\cdot)$ denotes the Gamma function.
\end{definition}

\begin{definition}[see \cite{Alm17}]
For $n-1 < \alpha \le n$, and $x, \psi \in C^n[a, b]$ with $\psi'(t)>0$ for all $t \in [a, b]$,
the left-side $\psi$-Caputo fractional derivative of $x$ of order $\alpha$ is defined by
\begin{equation*}
{^C} D^{\alpha, \psi}_{a+} x(t)
=
I_{a+}^{n-\alpha, \psi} 
\left(\frac{1}{\psi'(t)}\frac{\ud}{\ud t}\right)^n
x(t)
.
\end{equation*}
\end{definition}

For complete surveys of basic properties of the fractional operators 
$ 
   I_{a+}^{\alpha, \psi} 
$ 
and 
$
   ^C D^{\psi, \alpha}_{a+} 
$, 
we refer to \cite{Alm17, KST06}. In this paper, we will use only the following properties.

\begin{lemma}
[see \cite{Alm17, KST06}]
\label{lem-CP}
We have

(i).
For $x, y \in C^n[a, b]$ then
$
{^C} D^{\alpha, \psi}_{a+} x(t)
=  \,
{^C} D^{\alpha, \psi}_{a+} y(t)
$
if and only if
$
x(t)
=
y(t)+\sum_{k=0}^{n-1} c_k  (\psi(t)-\psi(a))^k
$.

(ii).
For $x \in C^1[a, b]$, $\alpha>0$ then
$
{^C} D^{\alpha, \psi}_{a+} I^{\alpha, \psi}_{a+} x(t)
=
x(t)
$.

(iii). For $\alpha, \beta>0$, we have
$
I^{\alpha, \psi}_{a+} I^{\beta, \psi}_{a+} x(t)
=
I^{\alpha+\beta, \psi}_{a+} x(t)
$.
\end{lemma}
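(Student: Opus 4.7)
The plan is to reduce all three statements to classical Riemann–Liouville/Caputo facts by means of the change of variable induced by $\psi$. Since $\psi \in C^1_+[a,b]$, the map $\psi:[a,b]\to[\psi(a),\psi(b)]$ is a $C^1$–diffeomorphism. For any $x\in C([a,b],\bR)$ set $\wt x(s) = x(\psi^{-1}(s))$ for $s\in[\psi(a),\psi(b)]$. I would first verify the two identities
\[
   I_{a+}^{\alpha,\psi} x(t)
   = \bigl(I_{\psi(a)+}^{\alpha}\wt x\bigr)(\psi(t)),
\qquad
   \left(\frac{1}{\psi'(t)}\frac{\ud}{\ud t}\right)^{\!n} x(t)
   = \frac{\ud^n}{\ud s^n}\wt x(s)\Big|_{s=\psi(t)},
\]
the first by the substitution $u=\psi(\tau)$ in the integral (which converts $\psi'(\tau)\ud\tau$ into $\ud u$ and $\psi(t)-\psi(\tau)$ into $\psi(t)-u$), the second by the inverse function theorem applied inductively. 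Combining these yields
\[
   {^C}D_{a+}^{\alpha,\psi} x(t) = \bigl({^C}D_{\psi(a)+}^{\alpha}\wt x\bigr)(\psi(t)),
\]
so each $\psi$–operator is conjugate, via the composition $x\mapsto\wt x$, to its classical counterpart on $[\psi(a),\psi(b)]$.

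With this dictionary established, all three claims reduce to textbook facts. For (iii), the classical semigroup identity $I^{\alpha}_{\psi(a)+}I^{\beta}_{\psi(a)+}\wt x = I^{\alpha+\beta}_{\psi(a)+}\wt x$ transports back to $I^{\alpha,\psi}_{a+}I^{\beta,\psi}_{a+}x = I^{\alpha+\beta,\psi}_{a+}x$. For (ii), the classical inversion ${^C}D^{\alpha}_{\psi(a)+}I^{\alpha}_{\psi(a)+}\wt x = \wt x$ (valid for $\wt x \in C^1$, which follows from $x\in C^1[a,b]$ and $\psi\in C^1_+[a,b]$) gives ${^C}D^{\alpha,\psi}_{a+}I^{\alpha,\psi}_{a+}x = x$. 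For (i), the equality ${^C}D^{\alpha,\psi}_{a+}x = {^C}D^{\alpha,\psi}_{a+}y$ is equivalent to ${^C}D^{\alpha}_{\psi(a)+}(\wt x - \wt y) = 0$, which by the classical result is equivalent to $\wt x(s) - \wt y(s) = \sum_{k=0}^{n-1}c_k(s-\psi(a))^k$; evaluating at $s=\psi(t)$ produces the desired representation.

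The only genuine work, apart from the change–of–variable bookkeeping, is the classical $(\Rightarrow)$ direction of (i), which I would prove as follows. Starting from $I^{n-\alpha,\psi}_{a+}\bigl((\psi')^{-1}\ud/\ud t\bigr)^n z = 0$ with $z=x-y$, I would apply part (ii) in the form ${^C}D^{n-\alpha,\psi}_{a+}I^{n-\alpha,\psi}_{a+} = \mathrm{id}$ (or equivalently invoke injectivity of $I^{n-\alpha,\psi}_{a+}$ on continuous functions, via the Titchmarsh convolution theorem applied to the transported integral) to conclude $\bigl((\psi')^{-1}\ud/\ud t\bigr)^n z = 0$; integrating $n$ times in the variable $\psi(t)$ then forces $z$ to be a polynomial of degree $\le n-1$ in $\psi(t)-\psi(a)$.

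The main obstacle will be to make sure the regularity assumptions propagate correctly under the substitution $x\mapsto\wt x$: one needs $\wt x\in C^n[\psi(a),\psi(b)]$ to apply the classical Caputo calculus, which requires $\psi\in C^n_+[a,b]$ when $n\ge 2$. For the present paper only $0<\alpha,\beta\le 1$ (hence $n=1$) is used, so $\psi\in C^1_+[a,b]$ is enough, but I would flag this explicitly. An alternative, entirely self-contained, route would be to prove (iii) directly by Fubini and the Beta integral $\int_0^1 u^{\alpha-1}(1-u)^{\beta-1}\ud u = B(\alpha,\beta)$ after the inner substitution $u=(\psi(\tau)-\psi(s))/(\psi(t)-\psi(s))$, then deduce (ii) by differentiating $I^{1,\psi}_{a+}I^{\alpha,\psi}_{a+}x$ with respect to $\psi$, and finally obtain (i) from (ii); I would keep this as a backup in case a referee prefers not to invoke the classical conjugation.
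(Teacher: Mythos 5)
The paper itself gives no proof of this lemma: it is recalled verbatim from the cited references \cite{Alm17, KST06}, so there is nothing in the text to compare your argument against line by line. That said, your conjugation argument --- transporting $I_{a+}^{\alpha,\psi}$ and ${^C}D_{a+}^{\alpha,\psi}$ to the classical Riemann--Liouville and Caputo operators on $[\psi(a),\psi(b)]$ via $\wt x = x\circ\psi^{-1}$ and then pulling back the semigroup property, the inversion identity, and the kernel characterization --- is precisely the mechanism by which these facts are established in \cite{Alm17}, and your two preliminary identities are correct (the first by the substitution $u=\psi(\tau)$, the second by the chain rule for the inverse function). Your regularity caveat is well placed: the transported function $\wt x$ lies in $C^n$ only if $\psi\in C^n$, which matters for $n\ge 2$ but is harmless here since the paper only uses $0<\alpha,\beta\le 1$. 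The one point I would tighten is in (ii): for $x\in C^1[a,b]$ with $x(a)\neq 0$ the function $I^{\alpha,\psi}_{a+}x$ need not be $C^1$ up to the left endpoint (its derivative behaves like $(\psi(t)-\psi(a))^{\alpha-1}$ near $a$), so the composition ${^C}D^{\alpha,\psi}_{a+}I^{\alpha,\psi}_{a+}$ should be interpreted via the equivalent difference form of the Caputo derivative or on $(a,b]$; this is a standard wrinkle inherited from the classical theory and does not invalidate the argument, but it is worth a sentence if you write the proof out in full.
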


At the end of this section, we present two well-known results that we will use to prove the main results of our paper.

\begin{lemma}[Jensen's inequality]
\label{Jen}
Let $\mu$ be a positive measure and let $\Omega$ be a measurable set with $\mu(\Omega)=1$.
If $x$ is a real function in $L^1(\mu)$, if $a<x(t)<b$ for all $t \in \Omega$, and if $f$ is a convex on $(a, b)$, then
\begin{equation}
\label{covex}
f\left(\int_\Omega x \ud \mu\right)
\le 
\int_\Omega (f \circ x )\ud \mu
.
\end{equation}
If $f$ is concave on $(a, b)$, then the inequality \eqref{covex} holds with $\le$ reversed.
\end{lemma}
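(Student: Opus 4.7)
The plan is to prove Jensen's inequality by exploiting the supporting-line characterization of convex functions and the fact that $\mu$ is a probability measure. First, I would set $c = \int_\Omega x \ud \mu$ and verify that $c \in (a, b)$. Since $a < x(t) < b$ pointwise and $\mu(\Omega) = 1$, integration preserves the strict bounds in the weak sense $a \le c \le b$; a short argument (if $c = a$ or $c = b$ then $x$ would have to equal $a$ or $b$ almost everywhere, contradicting the strict inequality on $\Omega$) places $c$ in the open interval, which is needed so that $f$ is defined and well-behaved at $c$.

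Next, I would invoke the fundamental property that a convex function on an open interval admits a supporting line at every interior point: there exists a slope $\lambda \in \bR$ (any element of the subdifferential $[f'_-(c), f'_+(c)]$ works) such that
\begin{equation*}
   f(y) \ge f(c) + \lambda (y - c)
   \quad \text{for all } y \in (a, b).
\end{equation*}
This follows from the monotonicity of difference quotients of convex functions and is the genuinely non-trivial analytic input; I regard it as the main obstacle, since it rests on the existence of one-sided derivatives of convex functions.

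With the supporting line in hand, I would substitute $y = x(t)$, which is legitimate because $x(t) \in (a, b)$ for every $t \in \Omega$, and obtain
\begin{equation*}
   (f \circ x)(t) \ge f(c) + \lambda\bigl(x(t) - c\bigr).
\end{equation*}
Measurability of $f \circ x$ on the measurable set $\{x \in (a,b)\} = \Omega$ follows since continuity of the convex function $f$ on $(a,b)$ composed with the measurable function $x$ yields a measurable function. I would then integrate both sides against $\mu$, using $\mu(\Omega) = 1$ and the defining equation $\int_\Omega x \ud \mu = c$ to see that the linear term collapses:
\begin{equation*}
   \int_\Omega (f \circ x) \ud \mu
   \ge
   f(c) \cdot \mu(\Omega) + \lambda \left( \int_\Omega x \ud \mu - c \right)
   = f(c)
   = f\!\left(\int_\Omega x \ud \mu\right),
\end{equation*}
which is the desired inequality \eqref{covex}.

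Finally, for the concave case I would simply apply the convex case to $-f$, which is convex on $(a, b)$, and flip the resulting inequality. Alternatively, one can redo the argument with an upper supporting line; both approaches are routine once the convex case is settled.
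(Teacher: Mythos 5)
Your supporting-line argument is the standard and correct proof of Jensen's inequality (it is essentially the proof in Rudin's \emph{Real and Complex Analysis}), and every step you outline goes through: $c=\int_\Omega x\ud\mu$ lies in the open interval $(a,b)$ by the argument you sketch, the subdifferential of a convex function on an open interval is nonempty at every point, and integrating the supporting-line inequality collapses the linear term because $\mu(\Omega)=1$. The paper itself offers no proof to compare against --- it states this lemma only as a recalled classical result before the main development --- so there is no divergence of method to discuss. The one point you could make explicit is that $\int_\Omega (f\circ x)\ud\mu$ is a priori only well defined in $(-\infty,+\infty]$; the supporting-line bound $f\circ x\ge f(c)+\lambda(x-c)$ with integrable right-hand side shows the negative part of $f\circ x$ is integrable, so the inequality is meaningful even when the integral is $+\infty$.
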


\begin{lemma}[The nonlinear Leray-Schauder alternatives fixed point theorem \cite{Gra03}]
\label{Le-Sch}
Let $\mathbb{B}$ be a Banach space, and let $W$ be a closed convex subset of $\mathbb{B}$.
Let $V$ be a relatively open subset of $W$ and $0 \in V$.
Suppose that $Q: \overline{V} \to W$ is a continuous compact mapping. 
Then we have either

(i). $Q$ has a fixed point in $\overline{V}$

\ \ \ or

(ii). There exist $\lambda \in (0, 1)$ and $u \in \partial V$ such that
$u=\lambda Q u$.
\end{lemma}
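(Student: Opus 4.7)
The plan is to prove the lemma by contraposition: assuming that both alternatives (i) and (ii) fail, I would build a compact continuous self-map on a closed convex subset and apply Schauder's fixed point theorem, producing a fixed point that contradicts the failure of (i).

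First I would introduce the ``bifurcation set''
$A = \{u \in \overline{V} : u = \lambda Q u \text{ for some } \lambda \in [0,1]\}$,
where the closure is taken inside $W$. Since $Q$ is continuous and $Q(\overline{V})$ is relatively compact, $A$ is closed in $W$, and clearly $0 \in A$. The failure of (i) excludes $\lambda = 1$ in the defining relation, the failure of (ii) excludes $u \in \partial V$ with $\lambda \in (0,1)$, and $\lambda = 0$ forces $u = 0 \in V$; together these force $A \cap \partial V = \emptyset$. Consequently $A$ and $W \setminus V$ are disjoint closed subsets of $W$.

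Next I would invoke Urysohn's lemma in the metric (hence normal) space $W$ to obtain a continuous $\eta : W \to [0,1]$ with $\eta \equiv 1$ on $A$ and $\eta \equiv 0$ on $W \setminus V$. I then define a cut-off map by setting $R(u) = \eta(u) Q(u)$ for $u \in \overline{V}$ and $R(u) = 0$ for $u \in W \setminus V$. The two prescriptions agree on $\partial V$ (where $\eta = 0$), so $R : W \to \mathbb{B}$ is continuous. Convexity of $W$, together with $0 \in W$ and $\eta \in [0,1]$, gives $R(W) \subset W$. Moreover $R(W)$ is contained in $\overline{\operatorname{conv}}(Q(\overline{V}) \cup \{0\})$, which is compact by Mazur's theorem applied to the relatively compact set $Q(\overline{V})$.

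I would then apply Schauder's fixed point theorem to $R$ restricted to the closed convex compact set $K = \overline{\operatorname{conv}}(R(W) \cup \{0\}) \subset W$, obtaining $u_0 \in K$ with $u_0 = R(u_0)$. If $u_0 \in W \setminus V$ then $u_0 = R(u_0) = 0 \in V$, a contradiction; hence $u_0 \in \overline{V}$, so $u_0 = \eta(u_0) Q(u_0)$ places $u_0 \in A$, which in turn forces $\eta(u_0) = 1$ and therefore $u_0 = Qu_0$, contradicting the failure of (i). The main obstacle I anticipate is ensuring that the cut-off $R$ genuinely satisfies the hypotheses of Schauder's theorem when $W$ itself may be unbounded and non-compact; this is circumvented by working on the compact convex hull $K$, and by using the convexity of $W$ combined with $0 \in W$ to guarantee the self-mapping property $R(W) \subset W$.
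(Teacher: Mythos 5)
The paper does not prove this lemma at all: it is quoted verbatim from the literature (Granas--Dugundji \cite{Gra03}) as a known tool, so there is no in-paper argument to compare against. Your proposal is the standard textbook proof of the Leray--Schauder nonlinear alternative and it is correct: the set $A=\{u\in\overline{V}: u=\lambda Qu \text{ for some }\lambda\in[0,1]\}$ is closed (pass to a convergent subsequence of the $\lambda$'s and use continuity of $Q$), contains $0$, and is disjoint from $\partial V$ precisely when both alternatives fail (note $\lambda=0$ forces $u=0\in V$, so $0\notin\partial V$ since $V$ is relatively open); Urysohn applies because $W$ is metric; the cut-off map $R(u)=\eta(u)Qu$ lands in $W$ by convexity together with $0\in W$, and its range lies in the compact set $\overline{\operatorname{conv}}\bigl(\overline{Q(\overline{V})}\cup\{0\}\bigr)$ by Mazur's theorem, so Schauder's theorem on the compact convex set $K$ yields a fixed point whose membership in $A$ forces $\eta=1$ and hence a genuine fixed point of $Q$, the desired contradiction. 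All the steps you flag as potential obstacles (unboundedness of $W$, the pasting on $\partial V$) are handled correctly.
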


\section{Green function and some of its properties}
In this section, we establish the Green function for our problem and some of its properties.
We start with constructing the Green function for our problem.

\begin{lemma}
\label{lem-GR}
Let $0<\alpha, \beta \le 1$ with $\alpha+\beta>1$,
and let $\psi \in C^1_+[a, b]$.
If $x$ is a solution of the problem (\ref{main}) and (\ref{BV}) such that
${^C}D_{a+}^{\beta, \psi} x$, $f(\cdot, x(\cdot)) \in C^1[a, b]$, then $x$ 
is a solution of the following non-local integral equation
\begin{equation}
\label{Int-eq}
x(t)
=
\left(\frac{\psi(t)-\psi(a)}{\psi(b)-\psi(a)}\right)^{\beta}
\gf(x)
+
\int_a^b G(\tau, t) \psi'(\tau) f(\tau, x(\tau)) \ud \tau
\end{equation}
where
\begin{equation}
\label{GR-df}
G(\tau, t)
=
\begin{cases}
   g_1(\tau, t),
   \ \ & a \le \tau \le t \le b,
   \cr
   g_2(\tau, t),
   \ \ & a \le t \le \tau \le b
   ,
\end{cases}
\end{equation}
where
\begin{align*}
g_1(\tau, t)
& =
C (\psi(t)-\psi(a))^\beta (\psi(b)-\psi(\tau))^{\alpha+\beta-1}
-
D (\psi(t)-\psi(\tau))^{\alpha+\beta-1}
,
\\
g_2(\tau, t)
& =
C (\psi(t)-\psi(a))^\beta (\psi(b)-\psi(\tau))^{\alpha+\beta-1}
,
\end{align*}
and
$
C
=
1
/
\left[
   \Gamma(\alpha+\beta) 
   (\psi(b)-\psi(a))^\beta
\right]
$,
$ 
D
=
1/\Gamma(\alpha+\beta)
$.
\end{lemma}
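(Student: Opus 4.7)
The plan is to reduce the fractional differential equation to the integral equation \eqref{Int-eq} by successively applying the fractional integrals $I_{a+}^{\alpha,\psi}$ and $I_{a+}^{\beta,\psi}$ to both sides of \eqref{main}, and then determining the two constants of integration from the boundary conditions \eqref{BV}.

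First, I would apply $I_{a+}^{\alpha,\psi}$ to \eqref{main}. Since $f(\cdot, x(\cdot)) \in C^1[a,b]$, Lemma \ref{lem-CP}(ii) gives ${^C}D_{a+}^{\alpha,\psi}\, I_{a+}^{\alpha,\psi} f(\cdot, x(\cdot)) = f(\cdot, x(\cdot))$, so the equation can be rewritten as
\[
{^C}D_{a+}^{\alpha,\psi}\left[{^C}D_{a+}^{\beta,\psi} x(t) + I_{a+}^{\alpha,\psi} f(\cdot,x(\cdot))(t)\right] = 0 = {^C}D_{a+}^{\alpha,\psi}[0].
\]
Since $0 < \alpha \le 1$, applying Lemma \ref{lem-CP}(i) with $n = 1$ yields a constant $c_0$ such that
\[
{^C}D_{a+}^{\beta,\psi} x(t) = c_0 - I_{a+}^{\alpha,\psi} f(\cdot,x(\cdot))(t).
\]
Next I would apply $I_{a+}^{\beta,\psi}$ to both sides. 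Using Lemma \ref{lem-CP}(i)--(iii), the semigroup property $I_{a+}^{\beta,\psi} I_{a+}^{\alpha,\psi} = I_{a+}^{\alpha+\beta,\psi}$ and the elementary identity $I_{a+}^{\beta,\psi}[1](t) = (\psi(t)-\psi(a))^\beta/\Gamma(\beta+1)$, I obtain
\[
x(t) = c_1 + \frac{c_0}{\Gamma(\beta+1)}(\psi(t)-\psi(a))^\beta - I_{a+}^{\alpha+\beta,\psi} f(\cdot,x(\cdot))(t),
\]
for some constant $c_1$.

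The boundary condition $x(a)=0$ immediately forces $c_1 = 0$, and the condition $x(b) = \gf(x)$ solves for $c_0$ explicitly in terms of $\gf(x)$ and $I_{a+}^{\alpha+\beta,\psi} f(\cdot,x(\cdot))(b)$. Substituting these values back and writing both $I_{a+}^{\alpha+\beta,\psi}$ integrals in their explicit kernel form (noting that the term with integration up to $b$ contributes for every $\tau \in [a,b]$, while the term with integration up to $t$ contributes only for $\tau \le t$) will give precisely the split expression \eqref{GR-df} with the stated constants $C$ and $D$.

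There is no serious obstacle beyond careful bookkeeping; the only delicate point is verifying that the hypotheses ${^C}D_{a+}^{\beta,\psi} x,\; f(\cdot,x(\cdot)) \in C^1[a,b]$ are exactly what is needed to justify each application of Lemma \ref{lem-CP}(i)--(iii), so that the constants of integration appear in the correct form. Once the algebra is organized, identifying the two pieces of the kernel as $g_1$ and $g_2$ is immediate from the splitting $\int_a^b = \int_a^t + \int_t^b$ in the second integral.
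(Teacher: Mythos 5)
Your proposal is correct and follows essentially the same route as the paper's own proof: rewrite the source term as ${^C}D_{a+}^{\alpha,\psi} I_{a+}^{\alpha,\psi} f$ via Lemma \ref{lem-CP}(ii), invert the two sequential derivatives using Lemma \ref{lem-CP}(i) and (iii) to obtain $x(t)=c_1+\frac{c_0}{\Gamma(\beta+1)}(\psi(t)-\psi(a))^{\beta}-I_{a+}^{\alpha+\beta,\psi}f(t,x(t))$, fix the constants from $x(a)=0$ and $x(b)=\gf(x)$, and split $\int_a^b=\int_a^t+\int_t^b$ to read off $g_1$ and $g_2$. No gaps; only the bookkeeping you already identify remains.
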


\begin{proof}
From the assumption $f(\cdot, x(\cdot)) \in C^1[a, b]$, we obtain from Lemma \ref{lem-CP} that
\[
   {^C}D_{a+}^{\alpha, \psi} I_{a+}^{\alpha, \psi} f(t, x(t))
   =
   f(t, x(t))
   .
\]
This leads to Eq. (\ref{main}) equivalent to the following equation
\[
\left(
   {^C}D_{a+}^{\alpha, \psi} 
   \,
   {^C}D_{a+}^{\beta, \psi}
   x
\right)
(t)
=
- {^C}D_{a+}^{\alpha, \psi} I_{a+}^{\alpha, \psi} f(t, x(t))
.
\]
By virtue of Lemma \ref{lem-CP}, one has
\begin{align}
\label{c12}
x(t)
=
c_2
+
\left(I_{a+}^{\beta, \psi} c_1\right) (t)
 - 
I_{a+}^{\alpha+\beta, \psi} f(t, x(t))
 =
c_2 + \frac{c_1}{\Gamma(\beta+1)} (\psi(t)-\psi(a))^\beta
+
I_{a+}^{\alpha+\beta, \psi} f(t, x(t))
.
\end{align}
Here we have used the fact that 
$
\left(I_{a+}^{\beta, \psi} c_1\right) (t)
=
(c_1/\Gamma(\beta+1)) (\psi(t)-\psi(a))^\beta
$.

We now find the coefficients $c_1, c_2$.
Since $u(a)=0$, we find out $c_2=0$.
Continuously, we have
\begin{align*}
x(b)
 =
\frac{c_1}{\Gamma(\beta+1)}
(\psi(b)-\psi(a))^\beta
- 
\frac{1}{\Gamma(\alpha+\beta)}
\int_a^b \psi'(\tau) (\psi(b)-\psi(\tau))^{\alpha+\beta-1} f(\tau, x(\tau)) \ud \tau
.
\end{align*}
Using the nonlocal condition $x(b)=\gf(x)$, we get
\begin{align*}
c_1
=
\frac{
   \Gamma(\beta+1)
}
{
   (\psi(b)-\psi(a))^{\beta}
} 
\left(
\gf(x)
+
\frac{1}{\Gamma(\alpha+\beta)}
\int_a^b
   \psi'(\tau) (\psi(b)-\psi(\tau))^{\alpha+\beta-1} 
   f(\tau, x(\tau))
\ud \tau
\right)
.
\end{align*}
Substituting the obtained coefficients $c_1$ and $c_2$ into (\ref{c12}), we obtain
\begin{align*}
x(t)
&=
\left(\frac{\psi(t)-\psi(a)}{\psi(b)-\psi(a)}\right)^{\beta}
\gf(x)
+
\frac{1}{\Gamma(\alpha+\beta)} 
\left(\frac{\psi(t)-\psi(a)}{\psi(b)-\psi(a)}\right)^{\beta}
\int_a^b
   \psi'(\tau) (\psi(b)-\psi(\tau))^{\alpha+\beta-1} 
   f(\tau, x(\tau))
\ud \tau
\\
& -
\frac{1}{\Gamma(\alpha+\beta)}
\int_a^t \psi'(\tau) (\psi(t)-\psi(\tau))^{\alpha+\beta-1} f(\tau, x(\tau)) \ud \tau
\\
& =
\left(\frac{\psi(t)-\psi(a)}{\psi(b)-\psi(a)}\right)^{\beta}
\gf(x)
+
\int_a^b G(\tau, t) \psi'(\tau) f(\tau, x(\tau)) \ud \tau
.
\end{align*}
The proof of Lemma is completed.
\end{proof}

\begin{definition}
The function $G(\cdot, \cdot)$ given by (\ref{GR-df}) is called the Green function of the problem
(\ref{main}) and (\ref{BV}).
\end{definition}

\begin{proposition}
\label{pro-max}
Let $0<\alpha, \beta \le 1$ with $\alpha+\beta>1$, and let $\psi \in C^1_+[a, b]$ and the Green function $G$ be defined as in Lemma \ref{lem-GR}.
Then
\begin{align*}
\max_{a \le \tau, t \le b}|G(\tau, t)|
=
\frac{1}{\Gamma(\alpha+\beta)}
(\psi(b)-\psi(a))^{\alpha+\beta-1}
\max\left\{
\beta^\beta (\alpha+\beta-1)^{\alpha+\beta-1}
,
\frac{\beta (\alpha+\beta-1)^{\frac{\alpha+\beta-1}{\beta}}}{(\alpha+2\beta-1)^{\frac{\alpha+2\beta-1}{\beta}}} 
\right\}
.
\end{align*}
\end{proposition}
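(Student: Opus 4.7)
The plan is to introduce the change of variables $u=\psi(t)-\psi(a)$, $v=\psi(\tau)-\psi(a)$ with $L=\psi(b)-\psi(a)$, which turns $G$ into two explicit functions on the square $[0,L]^{2}$:
\begin{align*}
g_{1}(v,u) &=Cu^{\beta}(L-v)^{\alpha+\beta-1}-D(u-v)^{\alpha+\beta-1},\qquad 0\le v\le u\le L, \\
g_{2}(v,u) &=Cu^{\beta}(L-v)^{\alpha+\beta-1},\qquad 0\le u\le v\le L.
\end{align*}
Since $\psi'>0$, one has $g_{2}\ge 0$ on its domain, whereas $g_{1}$ can change sign, so $\max|G|=\max\{\sup g_{2},\sup g_{1},|\inf g_{1}|\}$. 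I would treat the two pieces separately and then combine.

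First I would handle $g_{2}$, which is quick. Its interior partial derivatives have fixed signs ($\partial_{u}g_{2}>0$ and $\partial_{v}g_{2}<0$ thanks to $\alpha+\beta>1$), so the supremum over the triangle $u\le v$ is attained on the diagonal $u=v$. There $g_{2}$ reduces to the one-variable function $u^{\beta}(L-u)^{\alpha+\beta-1}$; a single differentiation gives the critical point $u^{\star}=\beta L/(\alpha+2\beta-1)$ and an explicit maximum value, which after multiplication by $C$ produces one of the two candidates inside the $\max\{\cdot,\cdot\}$ of the statement.

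Next I would turn to $g_{1}$, where the real work lies. Using the key identity $CL^{\beta}=D$ one checks immediately that $g_{1}\equiv 0$ on the edge $u=L$ and that $g_{1}$ coincides with $g_{2}$ on the diagonal $u=v$, so $\sup g_{1}\le\sup g_{2}$. For the infimum, I would observe that on the edge $v=0$ the function reduces to $u^{\beta}[L^{\alpha-1}-u^{\alpha-1}]/\Gamma(\alpha+\beta)$, which is non-positive for $\alpha\le 1$. Solving $\nabla g_{1}=0$ in the interior of the triangle $v\le u$ yields the one-parameter relation $u=\beta(L-v)/(\alpha+\beta-1)$; substituting back, the candidate interior critical values either fail the admissibility constraints $0\le v\le u\le L$ or are dominated by the boundary values. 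An elementary one-variable optimization of the $v=0$ expression then produces the remaining candidate in the stated maximum.

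The main obstacle I expect is this $g_{1}$ step: one must verify simultaneously that the critical curve does not produce an interior extremum beating the boundary and that the infimum is attained on $v=0$ with the declared closed-form value. The algebra is elementary but delicate because the two summands of $g_{1}$ have different natural scales, and because $\alpha-1\le 0$ complicates the sign analysis. Once $\sup g_{2}$ and $|\inf g_{1}|$ are in hand, factoring out the common prefactor $(\psi(b)-\psi(a))^{\alpha+\beta-1}/\Gamma(\alpha+\beta)$ and comparing gives the $\max\{\cdot,\cdot\}$ in the proposition.
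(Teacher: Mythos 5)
Your strategy coincides with the paper's: split $G$ into $g_{1}$ and $g_{2}$, push $g_{2}$ to the diagonal and optimize one variable there, and reduce $|g_{1}|$ to the edges $v=0$ and $u=v$ of its triangle. The $g_{2}$ half is fine in outline. The genuine gap is in the $g_{1}$ half, and it sits exactly where you yourself flag ``the main obstacle'': the reduction to the boundary is never proved. The inference ``$g_{1}=g_{2}$ on the diagonal, so $\sup g_{1}\le\sup g_{2}$'' does not follow by itself, and ``the candidate interior critical values either fail the admissibility constraints or are dominated by the boundary values'' is the assertion to be established, not an argument. The paper closes this with a single monotonicity estimate: since $\alpha+\beta-2\le 0$ and $0\le u-v\le L-v$, one has $(u/L)^{\beta}(L-v)^{\alpha+\beta-2}\le(L-v)^{\alpha+\beta-2}\le(u-v)^{\alpha+\beta-2}$, hence $\partial g_{1}/\partial v\ge 0$ on the whole triangle $0\le v\le u\le L$. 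This rules out all interior critical points at once and shows that for each fixed $u$ the map $v\mapsto g_{1}(v,u)$ increases from $g_{1}(0,u)\le 0$ to $g_{1}(u,u)=g_{2}(u,u)\ge 0$, so that $\max|g_{1}|=\max\bigl\{\max_{u}g_{1}(u,u),\,\max_{u}|g_{1}(0,u)|\bigr\}$. You need this (or an equivalent) before you may restrict attention to the two edges.

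A second, more serious problem is that the one-variable optimizations you defer do not return the closed forms in the display, so the plan cannot be completed as announced. On the diagonal, the maximum of $Cu^{\beta}(L-u)^{\alpha+\beta-1}$ at $u^{\star}=\beta L/(\alpha+2\beta-1)$ equals $\beta^{\beta}(\alpha+\beta-1)^{\alpha+\beta-1}(\alpha+2\beta-1)^{-(\alpha+2\beta-1)}L^{\alpha+\beta-1}/\Gamma(\alpha+\beta)$, which differs from the first candidate by the factor $(\alpha+2\beta-1)^{-(\alpha+2\beta-1)}$. On the edge $v=0$, your (correctly simplified) expression $g_{1}(0,u)=u^{\beta}\bigl(L^{\alpha-1}-u^{\alpha-1}\bigr)/\Gamma(\alpha+\beta)$ is extremal where $(u/L)^{1-\alpha}=(\alpha+\beta-1)/\beta$, with absolute value $\frac{1-\alpha}{\alpha+\beta-1}\bigl(\frac{\alpha+\beta-1}{\beta}\bigr)^{\beta/(1-\alpha)}L^{\alpha+\beta-1}/\Gamma(\alpha+\beta)$; this is not the second candidate either, and it also disagrees with the paper's own simplification of $g_{1}(a,t)$, so the discrepancy must be resolved rather than assumed away. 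A sanity check with $\alpha=\beta=1$ and $\psi(t)=t$ on $[0,1]$ gives $\max|G|=1/4$, while the displayed right-hand side evaluates to $1$. Hence the steps ``produces one of the two candidates'' and ``produces the remaining candidate'' are precisely where your argument would fail if executed: the deferred algebra has to be carried out explicitly and reconciled with the stated constants before the proof can be considered complete.
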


\begin{proof}
We consider the functions $g_1$ and $g_2$ 
which are defined in Lemma \ref{lem-GR}.
We divide the proof into two steps.
\\
\\
{\it Step 1.}
We show that
\begin{equation*}
\max_{a \le t \le \tau \le b}
g_2(\tau, t)
=
g(\tau_0)
=
\frac{1}{\Gamma(\alpha+\beta)} 
\beta^\beta (\alpha+\beta-1)^{\alpha+\beta-1}
(\psi(b)-\psi(a))^{\alpha+\beta-1}
.
\end{equation*}

Since $\psi$ is an increasing function, hence, we have $0 \le \psi(t)-\psi(a) \le \psi(\tau)-\psi(a)$
for all $a \le t \le \tau \le b$. This leads to
\begin{equation}
\label{g-df}
0 \le 
g_2(\tau, t)
\le 
C (\psi(\tau)-\psi(a))^\beta (\psi(b)-\psi(\tau))^{\alpha+\beta-1}
:=g(\tau)
\end{equation}
for any $a \le t \le \tau \le b$.
~
~
By direct computations, we have 
\begin{align*}
g'(\tau)
=
C(\psi(\tau)-\psi(a))^{\beta-1} (\psi(b)-\psi(\tau))^{\alpha+\beta-2}
\left(\beta (\psi(b)-\psi(\tau))-(\alpha+\beta-1)(\psi(\tau)-\psi(a))\right)
,
\end{align*}
where $C$ defined in Lemma \ref{lem-GR}.
Since $\beta-1 \le 0$ and $\alpha+\beta-2 \le 0$, 
we can find that $g'(\tau)=0$ at
$
\tau_0
=
\psi^{-1}\left(\frac{\beta \psi(b)+(\alpha+\beta-1)\psi(a)}{\alpha+2\beta-1}\right)
$.
Observe that $\psi(a)<\frac{\beta \psi(b)+(\alpha+\beta-1)\psi(a)}{\alpha+2\beta-1}<\psi(b)$,
this implies that $\tau_0 \in (a, b)$ and
$
\max_{a \le \tau \le b} g(\tau)
=
g(\tau_0)
=
C \beta^\beta (\alpha+\beta-1)^{\alpha+\beta-1}
(\psi(b)-\psi(a))^{\alpha+2\beta-1}
$.
This leads to the desired result of { \it Step 1}.
\\
\\
{\it Step 2.}
We prove that
\begin{align*}
\max_{a \le \tau \le t \le b} |g_1(\tau, t)|
=
\frac{1}{\Gamma(\alpha+\beta)}
(\psi(b)-\psi(a))^{\alpha+\beta-1}
\max\left\{
\beta^\beta (\alpha+\beta-1)^{\alpha+\beta-1}
,
\frac{\beta (\alpha+\beta-1)^{\frac{\alpha+\beta-1}{\beta}}}{(\alpha+2\beta-1)^{\frac{\alpha+2\beta-1}{\beta}}} 
\right\}
.
\end{align*}

We firstly verify that $g_1(\tau, t)$ is an increasing function with respect to $\tau$ on $[a, t]$.
~
~
Indeed, using the fact that  $C/D=(\psi(b)-\psi(a))^\beta$
and the following estimate
\begin{align*} 
\left(\frac{\psi(t)-\psi(a)}{\psi(b)-\psi(a)}\right)^\beta 
(\psi(b)-\psi(\tau))^{\alpha+\beta-2}
-
(\psi(t)-\psi(\tau))^{\alpha+\beta-2}
 \le
(\psi(b)-\psi(\tau))^{\alpha+\beta-2}
-
(\psi(t)-\psi(\tau))^{\alpha+\beta-2}  
\le 0  
,
\end{align*}
we get
\begin{align*}
\frac{\partial g_1}{\partial \tau}(\tau, t)
& = 
- (\alpha+\beta-1) \psi'(\tau)
\left[
   C (\psi(t)-\psi(a))^\beta (\psi(b)-\psi(\tau))^{\alpha+\beta-2}
   -
   D (\psi(t)-\psi(\tau))^{\alpha+\beta-2}
\right]
\\
& = 
- D(\alpha+\beta-1) \psi'(\tau)
\left[
   \left(\frac{\psi(t)-\psi(a)}{\psi(b)-\psi(a)}\right)^\beta 
   (\psi(b)-\psi(\tau))^{\alpha+\beta-2}
   -
   (\psi(t)-\psi(\tau))^{\alpha+\beta-2}
\right]
\\
& \ge 0 \ \ \text{for all} \ \ \tau \in [a, t]
.
\end{align*}
This implies that
\begin{equation*}
\max_{a \le \tau \le t}|g_1(\tau, t)|
=
\max\{|g_1(a, t)|, |g_1(t, t)|\}
.
\end{equation*}
Thus,
\begin{equation}
\label{es-g1}
\max_{a \le \tau \le t \le b} |g_1(\tau, t)|
=
\max\{\max_{a \le t \le b}|g_1(a, t)|, \max_{a \le t \le b}|g_1(t, t)|\}
.
\end{equation}
Note that $g_1(t, t)=g(t)$ with $g$ defined in (\ref{g-df}), this gives $g_1(t, t) \ge 0$ and
\begin{equation}
\label{es1-g1}
\max_{a \le t \le b}|g_1(t, t)|
=
\frac{1}{\Gamma(\alpha+\beta)} 
\beta^\beta (\alpha+\beta-1)^{\alpha+\beta-1}
(\psi(b)-\psi(a))^{\alpha+\beta-1}
.
\end{equation}
We now consider the function $h(t):=g_1(a, t)$.
By directly computes, one has
\[
h'(t)
=
\frac{(\psi(t)-\psi(a))^{\alpha+\beta-2}}{\Gamma(\alpha+\beta)} 
\left(
   (\alpha+2\beta-1) \left(\frac{\psi(t)-\psi(a)}{\psi(b)-\psi(a)}\right)^\beta
   -
   (\alpha+\beta-1)
\right)
.
\]
We deduce $h'(t)=0$ at $t_0$ such that 
$
\psi(t_0)
=
\psi(a)
+
\left(\frac{\alpha+\beta-1}{\alpha+2\beta-1}\right)^{1/\beta}
(\psi(b)-\psi(a))
:=\psi_0
$,
or 
$t_0=\psi^{-1}(\psi_0)$.
Since $0<\left(\frac{\alpha+\beta-1}{\alpha+2\beta-1}\right)^{1/\beta}<1$, 
then we have 
$
\psi(a)
<
\psi_0
=
\psi(a)
+
\left(\frac{\alpha+\beta-1}{\alpha+2\beta-1}\right)^{1/\beta}
(\psi(b)-\psi(a))
<\psi(b)
$.
This gives $a<t_0<b$.
On the other hand, we have $h(a)=0$ and 
\begin{align*}
h(t)
=
\frac{1}{\Gamma(\alpha+\beta)} 
(\psi(t)-\psi(a))^{\alpha+\beta-1}
\left(
   \left(\frac{\psi(t)-\psi(a)}{\psi(b)-\psi(a)}\right)^\beta - 1
\right)
\le 
0
\end{align*}
for all $t \in [a, b]$.
From the above discussions, we conclude that 
\begin{align}
\label{es2-g1}
\max_{a \le t \le b} |h(t)|
=
|h(t_0)|
=
\frac{\beta}{(\alpha+2\beta-1)\Gamma(\alpha+\beta)} 
\left(\frac{\alpha+\beta-1}{\alpha+2\beta-1}\right)^{\frac{\alpha+\beta-1}{\beta}}
(\psi(b)-\psi(a))^{\alpha+\beta-1}
.
\end{align}
Combining (\ref{es1-g1}) and (\ref{es2-g1}) together with (\ref{es-g1}),
we obtain the result of {\it Step 2}.

Finally, we can combine {\it Step 1} and {\it Step 2} to obtain the desired result of Proposition. 
\end{proof}

\begin{proposition}
\label{pro-conti}
Let $0<\alpha, \beta \le 1$ with $\alpha+\beta>1$, and let $\psi \in C^1_+[a, b]$ and the Green function $G$ be defined as in Lemma \ref{lem-GR}.
Then, for any $a \le t_1 \le t_2 \le b$, we have
\[
|G(\tau, t_2)-G(\tau, t_1)|
\le 
\frac{2}{\Gamma(\alpha+\beta)} 
(\psi(t_2)-\psi(t_1))^{\alpha+\beta-1}
.
\]
\end{proposition}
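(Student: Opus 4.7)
The plan is to split the argument into three cases according to the position of $\tau$ relative to $t_1, t_2$: (a) $a\le\tau\le t_1\le t_2\le b$, where both $G(\tau,t_1)$ and $G(\tau,t_2)$ are given by $g_1$; (b) $a\le t_1\le\tau\le t_2\le b$, where $G(\tau,t_1)=g_2(\tau,t_1)$ while $G(\tau,t_2)=g_1(\tau,t_2)$; (c) $a\le t_1\le t_2\le\tau\le b$, where both values come from $g_2$. In each case I will estimate separately the ``$C$-contribution'' involving $(\psi(t)-\psi(a))^\beta(\psi(b)-\psi(\tau))^{\alpha+\beta-1}$ and, where present, the ``$D$-contribution'' involving $(\psi(t)-\psi(\tau))^{\alpha+\beta-1}$, and then add the two bounds; this will account directly for the factor $2$ in the statement.

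The main analytical tool is the elementary subadditivity inequality $|u^\gamma-v^\gamma|\le|u-v|^\gamma$, valid for $u,v\ge 0$ and $\gamma\in(0,1]$. I apply it with $\gamma=\alpha+\beta-1\in(0,1]$ (recall that $1<\alpha+\beta\le 2$) to handle differences of $(\psi(\cdot)-\psi(\tau))^{\alpha+\beta-1}$, and with $\gamma=\beta\in(0,1]$ to handle $(\psi(t_2)-\psi(a))^\beta-(\psi(t_1)-\psi(a))^\beta$. The $D$-contribution is then straightforward: in case (a), subadditivity at exponent $\alpha+\beta-1$ directly yields the bound $\tfrac{1}{\Gamma(\alpha+\beta)}(\psi(t_2)-\psi(t_1))^{\alpha+\beta-1}$; in case (b), the same bound follows from the monotonicity inequality $\psi(t_2)-\psi(\tau)\le\psi(t_2)-\psi(t_1)$, which holds because $\tau\ge t_1$.

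The delicate step, and the main obstacle, is the $C$-contribution. Applying subadditivity at exponent $\beta$ only produces the factor $(\psi(t_2)-\psi(t_1))^\beta$, whereas the statement requires $(\psi(t_2)-\psi(t_1))^{\alpha+\beta-1}$; these exponents differ by $1-\alpha\ge 0$. To bridge them I plan to bound $(\psi(b)-\psi(\tau))^{\alpha+\beta-1}\le(\psi(b)-\psi(a))^{\alpha+\beta-1}$, use the explicit form $C=[\Gamma(\alpha+\beta)(\psi(b)-\psi(a))^\beta]^{-1}$, and split $(\psi(t_2)-\psi(t_1))^\beta=(\psi(t_2)-\psi(t_1))^{\alpha+\beta-1}\cdot(\psi(t_2)-\psi(t_1))^{1-\alpha}$. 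Since $\psi(t_2)-\psi(t_1)\le\psi(b)-\psi(a)$ and $1-\alpha\ge 0$, the ratio $\bigl((\psi(t_2)-\psi(t_1))/(\psi(b)-\psi(a))\bigr)^{1-\alpha}\le 1$ absorbs the residual mismatch, and the $C$-contribution is also bounded by $\tfrac{1}{\Gamma(\alpha+\beta)}(\psi(t_2)-\psi(t_1))^{\alpha+\beta-1}$.

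Adding the $C$- and $D$-contributions closes cases (a) and (b) with the required constant $2/\Gamma(\alpha+\beta)$; case (c) involves only the $C$-contribution and therefore gives a strictly better estimate. Once the exponent-balancing trick above is in place, the remaining manipulations are routine monotonicity estimates, and the proposition follows by taking the maximum over the three cases.
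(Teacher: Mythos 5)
Your proposal is correct and follows essentially the same route as the paper's own proof: the identical three-case split according to the position of $\tau$ relative to $t_1,t_2$, the subadditivity inequality $|v^p-w^p|\le|v-w|^p$ for $p\in(0,1]$, and the same exponent-balancing step that converts $(\psi(t_2)-\psi(t_1))^{\beta}$ into $(\psi(t_2)-\psi(t_1))^{\alpha+\beta-1}$ by absorbing the factor $(\psi(t_2)-\psi(t_1))^{1-\alpha}\le(\psi(b)-\psi(a))^{1-\alpha}$ into the constant $C$. Nothing further is needed.
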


\begin{proof}
In the process of the proof of Proposition, we use the fact that
\begin{equation}
\label{inq}
|v^p-w^p| \le |v-w|^p
\end{equation}
for any $v, w \ge 0$ and $0 \le p \le 1$.
\\
We will consider three cases.
\\
{\it The first case: $a \le t_1 \le t_2 \le \tau \le b$.}
We firstly have
$
(\psi(t_2)-\psi(t_1))^\beta
=
(\psi(t_2)-\psi(t_1))^{\alpha+\beta-1}
(\psi(t_2)-\psi(t_1))^{1-\alpha}
\le 
(\psi(t_2)-\psi(t_1))^{\alpha+\beta-1}
(\psi(b)-\psi(a))^{1-\alpha}
$.
Hence, we obtain from (\ref{inq}) that
\begin{align*}
|G(\tau, t_2)-G(\tau, t_1)|
& \le 
C (\psi(b)-\psi(\tau))^{\alpha+\beta-1}
\left|(\psi(t_2)-\psi(a))^\beta-(\psi(t_1)-\psi(a))^\beta\right|
\\
& \le 
C (\psi(b)-\psi(a))^{\alpha+\beta-1}
(\psi(t_2)-\psi(t_1))^\beta
\\
& \le 
C (\psi(b)-\psi(a))^\beta
(\psi(t_2)-\psi(t_1))^{\alpha+\beta-1}
\\
& =
\frac{1}{\Gamma(\alpha+\beta)}
(\psi(t_2)-\psi(t_1))^{\alpha+\beta-1}
\end{align*}
due to $C(\psi(b)-\psi(a))^\beta=1/\Gamma(\alpha+\beta)$.
\\
{\it The second case: $a \le \tau \le t_1 \le t_2 \le b$.}
Similar to the first case, we have
\begin{align*}
|G(\tau, t_2)-G(\tau, t_1)|
& \le 
C (\psi(b)-\psi(\tau))^{\alpha+\beta-1}
\left|(\psi(t_2)-\psi(a))^\beta-(\psi(t_1)-\psi(a))^\beta\right|
\\
& +
D \left|(\psi(t_2)-\psi(\tau))^{\alpha+\beta-1}-(\psi(t_1)-\psi(\tau))^{\alpha+\beta-1}\right|
\\
& \le 
\frac{2}{\Gamma(\alpha+\beta)}
(\psi(t_2)-\psi(t_1))^{\alpha+\beta-1}
\end{align*}
due to $D=1/\Gamma(\alpha+\beta)$.
\\
{\it The third case: $a \le t_1 \le \tau \le t_2 \le b$.}
Using the same methods to that have used to prove the first case, we have
\begin{align*}
|G(\tau, t_2)-G(\tau, t_1)|
& \le 
C (\psi(b)-\psi(\tau))^{\alpha+\beta-1}
\left|(\psi(t_2)-\psi(a))^\beta-(\psi(t_1)-\psi(a))^\beta\right|
+
D(\psi(t_2)-\psi(\tau))^{\alpha+\beta-1}
\\
& \le 
\frac{2}{\Gamma(\alpha+\beta)}
(\psi(t_2)-\psi(t_1))^{\alpha+\beta-1}
\end{align*}
due to $\psi(t_2)-\psi(\tau) \le \psi(t_2)-\psi(t_1)$.
Combining three cases above, we obtain the desired result of Proposition.
\end{proof}

\section{Lyapunov-type inequality and existence results}
In this section, we investigate a new Lyapunov-type inequality and obtain a lower bound for the possible eigenvalues of our problem. We also present some existence results for our problem.

To obtain Lyapunov-type inquality, the following assumptions will be posed.
\begin{itemize}
\item[•]
{\bf  Assumption $(\mathcal{A} 1)$:} 
There exist a positive function $q: (a, b) \to \bR_+$ 
and a positive, non-decreasing and concave function  $\phi: \bR \to \bR$ such that
\[
| f(t, x) |
\le 
q(t) |\phi(x)|
\]
for any $t \in (a, b)$ and $x \in \bR$.

\item[•]
{\bf  Assumption $(\mathcal{A} 2)$:} 
There exists $\kappa \in (0, 1)$ such that
\[
|\gf(x)|
\le 
\kappa |x|
\]
for any $x \in \bR$.
\end{itemize}

In this section, we also use the concept of mild solutions as follows.

\begin{definition}
The function $x \in C[a, b]$ satisfying the Eq. (\ref{Int-eq}) is called mild solution of the problem (\ref{main}) and (\ref{BV}).
\end{definition}

We now present a Lyapunov-type inequality for our problem.

\begin{theorem}
\label{thm-Lya}
Let $0<\alpha, \beta \le 1$ with $\alpha+\beta>1$, and let $\psi \in C^1_+[a, b]$.
Suppose that Assumptions $(\mathcal{A} 1)$ and $(\mathcal{A} 2)$ are satisfied. 
If $\psi'(\cdot) q(\cdot) \in L^1(a, b)$ and the problem (\ref{main})-(\ref{BV}) has a nontrivial mild solution,
then
\[
\int_a^b \psi'(\tau) q(\tau) \ud \tau
\ge 
\frac{1-\kappa}{G_{\max}}
\frac{\|x\|}{\phi(\|x\|)}
,
\]
where
\begin{align*}
G_{\max}
=
\frac{1}{\Gamma(\alpha+\beta)}
(\psi(b)-\psi(a))^{\alpha+\beta-1}
\max\left\{
\beta^\beta (\alpha+\beta-1)^{\alpha+\beta-1}
,
\frac{\beta (\alpha+\beta-1)^{\frac{\alpha+\beta-1}{\beta}}}{(\alpha+2\beta-1)^{\frac{\alpha+2\beta-1}{\beta}}} 
\right\}
.
\end{align*} 
\end{theorem}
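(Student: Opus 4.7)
The plan is to work directly from the integral representation \eqref{Int-eq} furnished by Lemma \ref{lem-GR}, bound each term using the assumptions $(\mathcal{A}1)$--$(\mathcal{A}2)$ together with the uniform estimate on the Green function from Proposition \ref{pro-max}, and then rearrange. Since $x$ is a mild solution, \eqref{Int-eq} holds, and taking absolute values gives
$$|x(t)| \le \left(\frac{\psi(t)-\psi(a)}{\psi(b)-\psi(a)}\right)^\beta |\gf(x)| + \int_a^b |G(\tau,t)|\, \psi'(\tau)\, |f(\tau,x(\tau))|\, \ud \tau.$$

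Next I would estimate each piece. The prefactor $\left((\psi(t)-\psi(a))/(\psi(b)-\psi(a))\right)^\beta$ is bounded by $1$ since $\psi$ is increasing, and Assumption $(\mathcal{A}2)$ yields $|\gf(x)| \le \kappa \|x\|$. Proposition \ref{pro-max} yields $|G(\tau,t)| \le G_{\max}$ uniformly on $[a,b]\times[a,b]$, while Assumption $(\mathcal{A}1)$ gives $|f(\tau,x(\tau))| \le q(\tau)|\phi(x(\tau))|$. For the $\phi$-factor, since $\phi$ is positive and non-decreasing and $x(\tau) \le \|x\|$, one has $|\phi(x(\tau))| = \phi(x(\tau)) \le \phi(\|x\|)$. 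Alternatively, concavity combined with Jensen's inequality (Lemma \ref{Jen}) applied to the normalized measure $\psi'(\tau)q(\tau)\,\ud\tau / \int_a^b \psi'(s)q(s)\,\ud s$ produces an equivalent bound on the integral.

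Plugging these estimates in and taking the supremum over $t\in[a,b]$ leads to
$$\|x\| \le \kappa \|x\| + G_{\max}\, \phi(\|x\|) \int_a^b \psi'(\tau) q(\tau)\, \ud\tau.$$
Since $x$ is nontrivial one has $\|x\|>0$, and positivity of $\phi$ gives $\phi(\|x\|)>0$; transferring $\kappa\|x\|$ to the left-hand side and dividing by $G_{\max}\,\phi(\|x\|)$ yields the claimed inequality.

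No serious obstacle is anticipated: the argument is a direct chain of estimates. The only minor points of care are verifying that the right-hand side is well-defined (handled by positivity of $\phi$ and nontriviality of $x$) and that the integral $\int_a^b \psi'(\tau) q(\tau) \,\ud\tau$ is finite, which is precisely the role of the hypothesis $\psi' q \in L^1(a,b)$ together with Proposition \ref{pro-max} to ensure the Green-function integral in \eqref{Int-eq} converges and dominated-convergence-type manipulations are legal.
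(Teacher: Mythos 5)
Your proposal is correct and follows essentially the same chain of estimates as the paper's proof: bound the prefactor $\left((\psi(t)-\psi(a))/(\psi(b)-\psi(a))\right)^{\beta}$ by $1$, use $(\mathcal{A}2)$ for the nonlocal term, Proposition \ref{pro-max} for the Green function, and $(\mathcal{A}1)$ together with a bound on $\phi$ to extract $\phi(\|x\|)$, then rearrange. The only cosmetic difference is that the paper routes the $\phi$-estimate through Jensen's inequality (Lemma \ref{Jen}) using concavity, whereas your primary argument uses positivity and monotonicity of $\phi$ directly, which suffices equally well and reaches the same bound.
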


\begin{remark}
Lyapunov-type inequality obtained in Theorem \ref{thm-Lya}
seems to be new, which is still not proposed in previous papers.
\end{remark}

\begin{proof}
Using the fact that $\frac{\psi(t)-\psi(a)}{\psi(b)-\psi(a)}\le 1$ for all $t \in [a, b]$,
Lemma \ref{Jen} and Lemma \ref{pro-max}, we obtain from (\ref{Int-eq}) that
\begin{align*}
|x(t)|
& \le 
|\gf(x)|
+
G_{\max}
\int_a^b  \psi'(\tau) |f(\tau, x(\tau))| \ud \tau
\\
& \le 
\kappa \dnorm{x}
+
G_{\max}
\int_a^b  \psi'(\tau) q(\tau) |\phi(x(\tau))| \ud \tau
\\
& \le 
\kappa \dnorm{x}
+
G_{\max} \dnorm{\psi'(\cdot) q(\cdot)}_{L^1(a, b)}
\phi\left(
   \int_a^b 
      \frac{|\psi'(\tau) q(\tau)|}{\dnorm{\psi'(\cdot) q(\cdot)}_{L^1(a, b)}} 
      |x(\tau)| 
   \ud \tau
\right)
\\
& \le 
\kappa \dnorm{x}
+
G_{\max} \dnorm{\psi'(\cdot) q(\cdot)}_{L^1(a, b)}
\phi(\dnorm{x})
,
\end{align*}
where
\begin{align*}
G_{\max}
=
\frac{1}{\Gamma(\alpha+\beta)}
(\psi(b)-\psi(a))^{\alpha+\beta-1}
\max\left\{
\beta^\beta (\alpha+\beta-1)^{\alpha+\beta-1}
,
\frac{\beta (\alpha+\beta-1)^{\frac{\alpha+\beta-1}{\beta}}}{(\alpha+2\beta-1)^{\frac{\alpha+2\beta-1}{\beta}}} 
\right\}
.
\end{align*}
From the latter inequality, we obtain 
\[
\int_a^b \psi'(\tau) q(\tau) \ud \tau
\ge 
\frac{1-\kappa}{G_{\max}}
\frac{\|x\|}{\phi(\|x\|)}
.
\]
This completes the proof of Theorem. 
\end{proof}

From the result of Theorem \ref{thm-Lya}, we immediately obtain 
Lyapunov-type inequalities for sequential fractional boundary value problems involving
Caputo and Hadamard fractional derivatives. 

\begin{corollary}
Let $0<\alpha, \beta \le 1$ with $\alpha+\beta>1$.
Suppose that Assumption $(\mathcal{A} 2)$ holds.
Suppose further that there exists $q: (a, b) \to \bR$ such that
\[
f(t, x)=q(t) x
\]
for all $t \in (a, b)$.
For $\psi(t)=t$, if $q(\cdot) \in L^1(a, b)$ 
and
the problem (\ref{main}) and (\ref{BV}) has a non-trivial mild solution,
then 
\[
\int_a^b 
   |q(\tau)|
\ud \tau
\ge 
\frac{1-\kappa}{G_1}
,
\]
where 
\begin{align*}
G_1
=
\frac{1}{\Gamma(\alpha+\beta)}
(b-a)^{\alpha+\beta-1}
\max\left\{
\beta^\beta (\alpha+\beta-1)^{\alpha+\beta-1}
,
\frac{\beta (\alpha+\beta-1)^{\frac{\alpha+\beta-1}{\beta}}}{(\alpha+2\beta-1)^{\frac{\alpha+2\beta-1}{\beta}}} 
\right\}
.
\end{align*} 
\end{corollary}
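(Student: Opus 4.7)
The plan is to derive this corollary as a direct specialization of Theorem \ref{thm-Lya}. First, I would set $\psi(t) = t$ so that $\psi'(t) \equiv 1$ and $\psi(b)-\psi(a) = b-a$; substituting these into the constant $G_{\max}$ in the statement of Theorem \ref{thm-Lya} yields exactly the constant $G_1$ appearing in the corollary.

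Next, I would verify that Assumption $(\mathcal{A}1)$ holds in this linear setting. Writing $f(t,x) = q(t)x$, set $\tilde q(t) := |q(t)|$, which is non-negative on $(a,b)$, and choose $\phi(x) := x$; this $\phi$ is nondecreasing and concave (in fact linear, so both convex and concave) and is positive on $\bR_+$, which is the only regime in which $\phi$ is evaluated in the proof of Theorem \ref{thm-Lya}. Then
$$|f(t,x)| = |q(t)|\,|x| = \tilde q(t)\,|\phi(x)|,$$
so $(\mathcal{A}1)$ holds. The integrability condition $\psi'(\cdot)\tilde q(\cdot) \in L^1(a,b)$ reduces to $|q| \in L^1(a,b)$, which follows from the hypothesis $q \in L^1(a,b)$; Assumption $(\mathcal{A}2)$ is imposed directly.

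Applying Theorem \ref{thm-Lya} now gives
$$\int_a^b |q(\tau)|\ud \tau \;\ge\; \frac{1-\kappa}{G_1}\,\frac{\|x\|}{\phi(\|x\|)}.$$
Because the mild solution $x$ is nontrivial we have $\|x\| > 0$, and since $\phi$ is the identity on $\bR_+$ we get $\phi(\|x\|) = \|x\|$, so the ratio equals $1$ and the claimed inequality follows.

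I do not foresee any genuine obstacle, since the corollary is essentially a bookkeeping specialization of the main theorem. The only mildly delicate point is confirming that the identity $\phi(x) = x$ legitimately plays the role of the \textbf{positive, non-decreasing, concave} function required by $(\mathcal{A}1)$; but this only needs to be checked on $\bR_+$, which is the set of arguments on which $\phi$ is actually evaluated in the proof of Theorem \ref{thm-Lya} (namely $|x(\tau)|$ and $\|x\|$).
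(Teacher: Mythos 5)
Your proposal is correct and matches the paper's intent exactly: the paper offers no separate proof, presenting this corollary as an immediate specialization of Theorem \ref{thm-Lya} with $\psi(t)=t$, $\tilde q=|q|$, and $\phi$ the identity, whence $\|x\|/\phi(\|x\|)=1$ for a nontrivial solution. Your remark about checking positivity and concavity of $\phi$ only on $\bR_+$ (the only arguments at which $\phi$ is evaluated in the theorem's proof) is the right way to reconcile the linear choice $\phi(x)=x$ with the formal hypotheses of Assumption $(\mathcal{A}1)$.
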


\begin{corollary}
Let $0<\alpha, \beta \le 1$ with $\alpha+\beta>1$.
Suppose that Assumption $(\mathcal{A} 2)$ holds.
Suppose further that there exists $q: (a, b) \to \bR$ $(a>0)$ such that
\[
f(t, x)=q(t) x
\]
for all $t \in (a, b)$.
For $\psi(t)=\ln t$, if 
\[
\int_a^b \frac{|q(\tau)|}{\tau} \ud \tau<+\infty
\] 
and
the problem (\ref{main}) and (\ref{BV}) has a non-trivial mild solution,
then 
\[
\int_a^b \frac{|q(\tau)|}{\tau} \ud \tau 
\ge 
\frac{1-\kappa}{G_2}
,
\]
where 
\begin{align*}
G_2
=
\frac{1}{\Gamma(\alpha+\beta)}
\left(\ln \frac{b}{a}\right)^{\alpha+\beta-1}
\max\left\{
\beta^\beta (\alpha+\beta-1)^{\alpha+\beta-1}
,
\frac{\beta (\alpha+\beta-1)^{\frac{\alpha+\beta-1}{\beta}}}{(\alpha+2\beta-1)^{\frac{\alpha+2\beta-1}{\beta}}} 
\right\}
.
\end{align*}
\end{corollary}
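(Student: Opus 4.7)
The plan is to specialize Theorem \ref{thm-Lya} to the Hadamard-type kernel $\psi(t)=\ln t$. Since $a>0$, the function $\psi$ lies in $C^1_+[a,b]$ with $\psi'(t)=1/t>0$, so the regularity hypothesis on $\psi$ is satisfied, and $\psi(b)-\psi(a)=\ln(b/a)$. Moreover, the condition $\psi'(\cdot)\,q(\cdot)\in L^1(a,b)$ required by Theorem \ref{thm-Lya} is equivalent to the given integrability assumption $\int_a^b |q(\tau)|/\tau\,\ud\tau<+\infty$, once we identify the positive weight in Assumption $(\mathcal{A}1)$ with $|q(\cdot)|$.

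The subtlety is that Assumption $(\mathcal{A}1)$ requires a concave $\phi$, while the natural choice $\phi(x)=|x|$ encoding the linear source $f(t,x)=q(t)x$ is not concave on $\bR$. I would circumvent this by reproducing the argument of Theorem \ref{thm-Lya} directly, bypassing the Jensen step. Starting from the integral representation \eqref{Int-eq} of a mild solution, I apply Proposition \ref{pro-max} to bound $|G(\tau,t)|$ by $G_{\max}$ and use the trivial estimate $(\psi(t)-\psi(a))^\beta/(\psi(b)-\psi(a))^\beta\le 1$ to obtain
\[
|x(t)|\le |\gf(x)|+G_{\max}\int_a^b \psi'(\tau)|q(\tau)|\,|x(\tau)|\,\ud\tau.
\]
Combining Assumption $(\mathcal{A}2)$ with the pointwise bound $|x(\tau)|\le \dnorm{x}$ and taking the supremum in $t$ yields
\[
\dnorm{x}\le \kappa\dnorm{x}+G_{\max}\dnorm{x}\int_a^b \psi'(\tau)|q(\tau)|\,\ud\tau.
\]
Because the solution is nontrivial, $\dnorm{x}>0$; dividing and rearranging gives $(1-\kappa)/G_{\max}\le \int_a^b \psi'(\tau)|q(\tau)|\,\ud\tau$.

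To finish, I substitute $\psi'(\tau)=1/\tau$ on the right and $\psi(b)-\psi(a)=\ln(b/a)$ in the expression for $G_{\max}$; this turns the constant $G_{\max}$ into $G_2$ and the integral into $\int_a^b |q(\tau)|/\tau\,\ud\tau$, producing the required Lyapunov-type bound. The only non-routine point is the concavity mismatch noted above; once Jensen is replaced by the elementary $L^\infty$ estimate for $|x(\tau)|$ (which costs nothing in the linear setting), the remainder of the argument is just bookkeeping with the specific $\psi$.
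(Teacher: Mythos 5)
Your proof is correct, and it reaches the same constant $G_2$ and the same inequality as the paper; but you take a slightly different route, and the difference is worth recording. The paper derives this corollary as an ``immediate'' specialization of Theorem \ref{thm-Lya} with $\psi(t)=\ln t$ and, implicitly, $\phi(x)=|x|$ (so that $\|x\|/\phi(\|x\|)=1$). You correctly observe that this specialization is not literally licensed by Assumption $(\mathcal{A}1)$: $x\mapsto|x|$ is neither concave nor non-decreasing on $\bR$, and the affine alternative $\phi(x)=x$ fails the positivity requirement, so the Jensen step in the proof of Theorem \ref{thm-Lya} cannot be invoked verbatim. Your fix --- rerunning the theorem's estimate but replacing Jensen by the trivial bound $|x(\tau)|\le\dnorm{x}$ inside the integral, which is lossless for a linear source --- is exactly the right repair, and the rest (identifying $\psi'(\tau)=1/\tau$, $\psi(b)-\psi(a)=\ln(b/a)$, and $G_{\max}=G_2$, then dividing by $\dnorm{x}>0$) matches the paper's intended computation. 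In short: the paper buys brevity by quoting the theorem; your version buys rigor by noticing that the theorem's hypothesis on $\phi$ does not cover the linear case and supplying the elementary argument that does.
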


We can also use the obtained Lyapunov-type inequality to derive 
a lower bound for possible eigenvalues of the sequential fractional value problem. 
In fact, we have the following result. 

\begin{corollary}
Let $0<\alpha, \beta \le 1$ with $\alpha+\beta>1$, and $\psi \in C^1_+[a, b]$. 
Suppose that Assumption $(\mathcal{A} 2)$ holds.
Suppose further that $\lambda$ is an eigenvalue of the following problem
\[
\begin{cases}
\left(
   {^C}D_{a+}^{\alpha, \psi} 
   \,
   {^C}D_{a+}^{\beta, \psi}
   x
\right)
(t)
=
\lambda x(t)
, & a<t<b,
\cr
x(a)=0, \ \ x(b)=G(x)
.
\end{cases}
\]
Then
\[
|\lambda| 
\ge 
\frac{1-\kappa}{G_{\max} (\psi(b)-\psi(a))}
,
\]
where $G_{\max}$ defined in Proposition \ref{pro-max}.
\end{corollary}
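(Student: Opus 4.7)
The plan is to recognize the eigenvalue problem as an instance of the nonlinear problem \eqref{main}--\eqref{BV} with the very simple source $f(t,x) = -\lambda x$, and then invoke Theorem~\ref{thm-Lya} with the most transparent choice of the dominating function $q$ and the gauge $\phi$.

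First, I would rewrite the eigenvalue equation $({^C}D_{a+}^{\alpha,\psi}\,{^C}D_{a+}^{\beta,\psi}x)(t) = \lambda x(t)$ in the form $({^C}D_{a+}^{\alpha,\psi}\,{^C}D_{a+}^{\beta,\psi}x)(t) + f(t,x(t)) = 0$ with $f(t,x) := -\lambda x$. Since $\lambda$ is, by definition, an eigenvalue, there exists a nontrivial mild solution $x$ of \eqref{main}--\eqref{BV} with this choice of $f$. Assumption $(\mathcal{A} 2)$ is assumed directly. For Assumption $(\mathcal{A} 1)$, the natural choice is the constant function $q(\tau) \equiv |\lambda|$ together with the linear gauge $\phi(u) = u$, which is positive (on $\bR_+$), non-decreasing, and concave; then the pointwise bound $|f(t,x)| = |\lambda|\,|x| = q(t)\,\phi(|x|)$ holds with equality, and $\psi'(\cdot)q(\cdot) = |\lambda|\psi'(\cdot) \in L^1(a,b)$ because $\psi \in C^1_+[a,b]$.

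Next, I would apply Theorem~\ref{thm-Lya}. With $\phi(u) = u$, the factor $\dnorm{x}/\phi(\dnorm{x})$ collapses to $1$ (note that $x$ being a nontrivial mild solution gives $\dnorm{x} > 0$, so this division is legitimate). On the other side, the integral simplifies explicitly:
\[
\int_a^b \psi'(\tau) q(\tau) \ud \tau = |\lambda| \int_a^b \psi'(\tau) \ud \tau = |\lambda|\,(\psi(b)-\psi(a)).
\]
Substituting these into the Lyapunov-type inequality of Theorem~\ref{thm-Lya} yields
\[
|\lambda|\,(\psi(b)-\psi(a)) \ge \frac{1-\kappa}{G_{\max}},
\]
and dividing by $\psi(b)-\psi(a) > 0$ produces the stated bound.

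The only delicate point is matching the linear choice $\phi(u)=u$ to the framework of Assumption $(\mathcal{A} 1)$, since a globally positive, non-decreasing, concave function on all of $\bR$ would ordinarily be bounded below by a positive constant; however, the proof of Theorem~\ref{thm-Lya} only uses $\phi$ evaluated at non-negative arguments (namely at $|x(\tau)|$ through Jensen's inequality and at $\dnorm{x}$ afterwards), so the linear gauge is admissible here. Everything else is mechanical substitution, and there is no further obstacle.
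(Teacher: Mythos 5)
Your proposal is correct and is exactly the derivation the paper intends (the corollary is stated without an explicit proof, as an immediate consequence of Theorem \ref{thm-Lya} with $f(t,x)=-\lambda x$, $q\equiv|\lambda|$ and $\phi(u)=u$). Your side remark about the linear gauge $\phi(u)=u$ not being globally positive is well taken, but the paper itself makes the same implicit choice in the two preceding corollaries, and, as you note, the proof of Theorem \ref{thm-Lya} only ever evaluates $\phi$ at non-negative arguments, so nothing breaks.
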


Next, we present some existence results for our problem.
To this aim, we make the following assumptions.

\begin{itemize}
\item[•]
{\bf  Assumption $(\mathcal{A} 3)$:} 
There exist positive functions $\kappa_1, \kappa_2: (a, b) \to \bR_+$ and 
a positive non-decreasing function $\Psi: \bR_+ \to \bR_+$ such that
\begin{align*}
| f(t, x) |
& \le 
\kappa_1(t) \Psi(|x|)
\ \ 
\text{for any}
\ \
t \in (a, b), \ x \in \bR,
\\
| f(t, x) - f(t, y) |
& \le 
\kappa_2(t) |\Phi(x, y)|
\ \ 
\text{for any}
\ \
t \in (a, b), \ x, y \in \bR,
\end{align*}
where $\Phi \in C(\bR \times \bR, \bR)$ and $\Phi(x, y) \to 0$ as $|x-y| \to 0$.

\item[•]
{\bf  Assumption $(\mathcal{A} 4)$:} 
There exists a positive function $\kappa_3: (a, b) \to \bR_+$ such that
\begin{align*}
| f(t, x) - f(t, y) |
\le 
\kappa_3(t) |x-y|
\ \ 
\text{for any}
\ \
t \in (a, b), \ x, y \in \bR
\end{align*}

\item[•]
{\bf  Assumption $(\mathcal{A} 5)$:} 
There exists $\kappa_4 \in (0, 1)$ such that
\begin{align*}
|\gf(x)-\gf(y)
& \le 
\kappa_4 |x-y|
\ \ 
\text{for all}
\ \
x, y \in \bR
.
\end{align*}
\end{itemize}

In the following theorem, we introduce an existence result for our problem.

\begin{theorem}
\label{thm-exi}
Let $0<\alpha, \beta \le 1$ with $\alpha+\beta>1$, and let $\psi \in C^1_+[a, b]$.
Suppose that Assumptions $(\mathcal{A} 2)$ and $(\mathcal{A} 3)$ hold.
Suppose further that $\gf \in C(\bR, \bR)$ and $\psi'(\cdot) \kappa_i(\cdot) \in L^1(a, b)$ for $i=1, 2$.
If there exists $M>0$ such that
\[
M
> 
\frac{1}{1-\kappa}\Psi(M) G_{\max}
\dnorm{\psi'(\cdot) \kappa_1(\cdot)}_{L^1(a, b)}
\]
with $G_{\max}$ defined in Theorem \ref{thm-Lya}, then
the problem (\ref{main}) and (\ref{BV}) has at least one mild solution.
\end{theorem}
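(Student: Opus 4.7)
The plan is to apply the Leray--Schauder alternative (Lemma \ref{Le-Sch}) to the operator $Q:C[a,b]\to C[a,b]$ defined by
\[
(Qx)(t)
=
\left(\frac{\psi(t)-\psi(a)}{\psi(b)-\psi(a)}\right)^{\beta}\gf(x)
+\int_a^b G(\tau,t)\,\psi'(\tau)\,f(\tau,x(\tau))\,\ud\tau,
\]
whose fixed points are exactly the mild solutions of (\ref{main})--(\ref{BV}) by Lemma \ref{lem-GR}. I would take $\mathbb{B}=W=C[a,b]$ and $V=\{x\in C[a,b]:\dnorm{x}<M\}$, which is relatively open in $W$ and contains $0$, so the task reduces to showing that $Q:\overline{V}\to C[a,b]$ is continuous and compact and that alternative (ii) of Lemma \ref{Le-Sch} is ruled out by the hypothesis on $M$. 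Assumption $(\mathcal{A}3)$ together with $\psi'\kappa_1\in L^1(a,b)$ yields $|\psi'(\tau)f(\tau,x(\tau))|\le \psi'(\tau)\kappa_1(\tau)\Psi(M)\in L^1(a,b)$ for $x\in\overline{V}$, so $Q$ is well defined.

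For compactness I would use Arzel\`a--Ascoli. Uniform boundedness on $\overline{V}$ follows from Proposition \ref{pro-max} and $(\mathcal{A}2),(\mathcal{A}3)$:
\[
\dnorm{Qx}\le \kappa M+G_{\max}\Psi(M)\dnorm{\psi'(\cdot)\kappa_1(\cdot)}_{L^1(a,b)}.
\]
Equicontinuity is the substantive step and comes from Proposition \ref{pro-conti} combined with the H\"older-type continuity of $t\mapsto (\psi(t)-\psi(a))^{\beta}$: for any $a\le t_1\le t_2\le b$ and any $x\in\overline{V}$,
\begin{align*}
|(Qx)(t_2)-(Qx)(t_1)|
\le{}&\kappa M\left|\left(\tfrac{\psi(t_2)-\psi(a)}{\psi(b)-\psi(a)}\right)^{\beta}-\left(\tfrac{\psi(t_1)-\psi(a)}{\psi(b)-\psi(a)}\right)^{\beta}\right|\\
&+\frac{2\,\Psi(M)\dnorm{\psi'\kappa_1}_{L^1(a,b)}}{\Gamma(\alpha+\beta)}\,(\psi(t_2)-\psi(t_1))^{\alpha+\beta-1},
\end{align*}
which tends to $0$ uniformly in $x$ as $|t_2-t_1|\to 0$, since $\alpha+\beta-1>0$ and $\psi$ is continuous. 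Continuity of $Q$ follows from continuity of $\gf$ together with the second inequality in $(\mathcal{A}3)$: if $x_n\to x$ in $C[a,b]$, then $|\Phi(x_n(\tau),x(\tau))|\to 0$ uniformly on $[a,b]$ (by uniform continuity of $\Phi$ on compacts) and is dominated by a constant, so dominated convergence applied to $\psi'(\tau)\kappa_2(\tau)|\Phi(x_n(\tau),x(\tau))|$ gives $\dnorm{Qx_n-Qx}\to 0$.

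To rule out alternative (ii), suppose $u=\lambda Qu$ for some $\lambda\in(0,1)$ and $u\in\partial V$, so $\dnorm{u}=M$. Reusing the boundedness estimate with $x=u$ and $\lambda<1$ yields
\[
M=\dnorm{u}\le \lambda\bigl(\kappa M+G_{\max}\Psi(M)\dnorm{\psi'\kappa_1}_{L^1(a,b)}\bigr)<\kappa M+G_{\max}\Psi(M)\dnorm{\psi'\kappa_1}_{L^1(a,b)},
\]
hence $M<\tfrac{1}{1-\kappa}G_{\max}\Psi(M)\dnorm{\psi'\kappa_1}_{L^1(a,b)}$, contradicting the standing hypothesis on $M$. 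Consequently alternative (i) of Lemma \ref{Le-Sch} must hold and $Q$ has a fixed point in $\overline{V}$, which is the desired mild solution.

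The main technical obstacle I anticipate is the equicontinuity estimate, because $\kappa_1$ is allowed to be singular (only $\psi'\kappa_1\in L^1$ is assumed) so a pointwise bound on the integrand would be unavailable. The argument goes through only because Proposition \ref{pro-conti} bounds $|G(\tau,t_2)-G(\tau,t_1)|$ by a quantity \emph{uniform in} $\tau$; this lets the possibly singular weight be absorbed into $\dnorm{\psi'\kappa_1}_{L^1(a,b)}$ and produces a modulus of continuity of order $|\psi(t_2)-\psi(t_1)|^{\alpha+\beta-1}$.
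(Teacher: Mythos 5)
Your proposal is correct and follows essentially the same route as the paper: the same integral operator, the Leray--Schauder alternative on the ball of radius $M$, boundedness via Proposition \ref{pro-max}, equicontinuity via Proposition \ref{pro-conti} with the singular weight absorbed into $\dnorm{\psi'(\cdot)\kappa_1(\cdot)}_{L^1(a,b)}$, continuity from $(\mathcal{A}3)$ and $\gf\in C(\bR,\bR)$, and the same contradiction ruling out alternative (ii). The only differences are cosmetic (explicit mention of Arzel\`a--Ascoli and dominated convergence where the paper simply pulls out the sup norm of $\Phi$).
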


\begin{proof}
Let us define the operator $S: C[a, b] \to C[a, b]$ by
\begin{equation}
\label{S-df}
S x(t)
=
\left(\frac{\psi(t)-\psi(a)}{\psi(b)-\psi(a)}\right)^{\beta}
\gf(x)
+
\int_a^b G(\tau, t) \psi'(\tau) f(\tau, x(\tau)) \ud \tau
,
\end{equation}
where $G$ defined in Lemma \ref{lem-GR}.
We also denote
\begin{equation*}
B_r=\{x \in C[a, b]: \dnorm{x} \le r\}
\ \ \text{for some} \ \ r>0
.
\end{equation*}

We firstly show that $S$ is completely continuous operator.
We divide the process of proof into three steps.
\\
{\it Step 1.}
We verify that $S$ is continuous operator.
Assume that $x, y \in B_r$ for some $r>0$,
we obtain from Proposition \ref{pro-max} and Assumption $(\mathcal{A} 3)$ that
\begin{align*}
\dnorm{S x - S y}
& \le 
\sup_{a \le t \le b}
\left|
   \left(\frac{\psi(t)-\psi(a)}{\psi(b)-\psi(a)}\right)^{\beta}
   (\gf(x)-\gf(y))
\right| 
+
\sup_{a \le t \le b}
\left|
   \int_a^b G(\tau, t) \psi'(\tau) (f(\tau, x(\tau))-f(\tau, y(\tau))) \ud \tau
\right| 
\\
& \le 
|\gf(x)-\gf(y)|
+
G_{\max}
\int_a^b \psi'(\tau) \kappa_2(\tau) \snorm{\Phi(x(\tau), y(\tau))}\ud \tau
\\
& \le 
|\gf(x)-\gf(y)| 
+
G_{\max} \dnorm{\Phi(x(\cdot), y(\cdot))}
\dnorm{\psi'(\cdot) \kappa_2(\cdot)}_{L^1(a, b)}
\\
& \to 0
\ \ \text{as} \ \ x \to y
\end{align*}
due to $\gf \in C(\bR, \bR)$ 
and 
$\dnorm{\Phi(x(\cdot), y(\cdot))} \to 0$ as $x \to y$ in $C[a, b]$.
This completes the proof of {\it Step 1}.
\\
{\it Step 2.}
We prove that $S$ maps bound sets into bound sets in $C[a, b]$.
Suppose that $x \in B_r$ for some $r>0$.
By virtue of Proposition \ref{pro-max} and Assumption $(\mathcal{A} 3)$,
one has
\begin{align*}
\dnorm{S x}
& \le 
\sup_{a \le t \le b}
\left|
   \left(\frac{\psi(t)-\psi(a)}{\psi(b)-\psi(a)}\right)^{\beta}
   \gf(x)
+
   \int_a^b G(\tau, t) \psi'(\tau) f(\tau, x(\tau))\ud \tau
\right| 
\\
& \le 
|\gf(x)|
+
G_{\max}
\int_a^b \psi'(\tau) \kappa_2(\tau) \snorm{\Psi(x(\tau))}\ud \tau
\\
& \le 
\kappa r+\Psi(r) \dnorm{\psi'(\cdot) \kappa_1(\cdot)}_{L^1(a, b)}
.
\end{align*}
The proof of {\it Step 2} is completed.
\\
{\it Step 3.}
We verify that $S$ maps bound sets into equicontinuous sets of $C[a, b]$.
We can assume that $x \in B_r$ for some $r>0$.
For $a \le t_1 \le t_2 \le b$, using (\ref{inq}), Lemma \ref{pro-conti} and Assumption $(\mathcal{A} 3)$,
we have
\begin{align*}
\snorm{S x(t_2)-S x(t_1)}
& \le 
\left|
   \left(\frac{\psi(t_2)-\psi(a)}{\psi(b)-\psi(a)}\right)^{\beta}
   -
   \left(\frac{\psi(t_1)-\psi(a)}{\psi(b)-\psi(a)}\right)^{\beta}
\right|    
|\gf(x)|
+
\left|
   \int_a^b |G(\tau, t_2)-G(\tau, t_1)| \psi'(\tau) \kappa_1(\tau) |\Psi(x(\tau))| \ud \tau
\right| 
\\
& \le 
\kappa r \left(\frac{\psi(t_2)-\psi(t_1)}{\psi(b)-\psi(a)}\right)^{\beta}
+
\frac{2}{\Gamma(\alpha+\beta)}  
(\psi(t_2)-\psi(t_1))^{\alpha+\beta-1}
\Psi(r) \dnorm{\psi'(\cdot) \kappa_1(\cdot)}_{L^1(a, b)}
\\
& \to 0
\ \ \text{uniformly as} \ \ t_2 \to t_1
.
\end{align*} 
The proof of {\it Step 3} is completed.

We now prove the result of this theorem.
Using Assumption $(\mathcal{A} 3)$, we have
$|f(t, x(t))| \le \Psi(M) \kappa_2(t)$ for any $x \in B_M$ and $t \in (a, b)$.
By the same method to that used to prove {\it Step 2}, we get
\[
\dnorm{S x} 
\le 
\kappa M 
+
\Psi(M) G_{\max} \dnorm{\psi'(\cdot) \kappa_1(\cdot)}_{L^1(a, b)}
,
\]
where $G_{\max}$ defined in Theorem \ref{thm-Lya}.
If there exist $k \in (0, 1)$ and $x \in \partial B_M$ such that $x=k S x$, then
from the latter inequality, we get
\[
M 
\le 
\kappa M 
+
\Psi(M) G_{\max} \dnorm{\psi'(\cdot) \kappa_1(\cdot)}_{L^1(a, b)}
.
\]
Or,
\[
M \le \frac{1}{1-\kappa}
\Psi(M) G_{\max} \dnorm{\psi'(\cdot) \kappa_1(\cdot)}_{L^1(a, b)}
.
\]
We obtain a contradiction with the hypothesis of Theorem.
Hence, using Lemma \ref{Le-Sch}, we deduce that $S$ has a fixed point in $B_M$,
which is a mild solution of our problem.
The proof of Theorem is done.
\end{proof}

\begin{remark}
\label{rem-exi}
We emphasize that the obtained result in Theorem \ref{thm-exi}
works with some source functions including singularities.
For example, in Assumption $(\mathcal{A} 3)$, we consider the cases:
$
\kappa_i(t)
=
k_i (\psi(b)-\psi(t))^{-\gamma_i}
(\psi(t)-\psi(a))^{-\mu_i}
$
for some $k_i>0$ ($i=1, 2$).
If $\gamma_i, \mu_i<1$ 
then
we have
$\psi'(\cdot) \kappa_i(\cdot) \in L^1(a, b)$ for $i=1, 2$.
Moreover, if $k_1$ sufficiently close to zero, then we can find $M>0$ such that
\begin{equation*}
M
> 
\frac{1}{1-\kappa}\Psi(M) G_{\max}
\dnorm{\psi'(\cdot) \kappa_1(\cdot)}_{L^1(a, b)}
.
\end{equation*}
To verify the above statements, we may use the fact that
\begin{align*}
\int_a^b 
   \psi'(\tau) (\psi(b)-\psi(\tau))^{-\gamma}
   (\psi(\tau)-\psi(a))^{-\mu}
\ud \tau
=
(\psi(b)-\psi(a))^{1-\gamma-\mu}
B(1-\gamma, 1-\mu)
,
\end{align*}
where $\gamma, \mu<1$ and $B(\cdot, \cdot)$ is the Beta function.
\end{remark}

Finally, we give an existence and uniqueness result for problem.

\begin{theorem}
\label{thm-uq}
Let $0<\alpha, \beta \le 1$ with $\alpha+\beta>1$, and let $\psi \in C^1_+[a, b]$.
Suppose that Assumptions $(\mathcal{A} 4)$ and $(\mathcal{A} 5)$ hold.
Suppose further that $\psi'(\cdot) \kappa_3(\cdot) \in L^1(a, b)$ and $\psi'(\cdot) f(\cdot, 0) \in L^1(a, b)$.
If 
\[
 \kappa_4+ G_{\max}\dnorm{\psi'(\cdot) \kappa_3(\cdot)}_{L^1(a, b)}
 <1
\]
then the problem (\ref{main}) and (\ref{BV}) has a unique mild solution.
\end{theorem}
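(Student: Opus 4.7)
The plan is to apply the Banach contraction principle to the operator $S$ defined in \eqref{S-df}, following the same integral representation used in Theorem~\ref{thm-exi}. First I would verify that $S$ maps $C[a,b]$ into itself. For any $x \in C[a,b]$, Assumption $(\mathcal{A}4)$ yields the pointwise growth bound
\[
|f(\tau,x(\tau))| \le |f(\tau,0)| + \kappa_3(\tau)\dnorm{x},
\]
so $\psi'(\tau)f(\tau,x(\tau)) \in L^1(a,b)$ by the two integrability hypotheses on $\psi'\kappa_3$ and $\psi'f(\cdot,0)$. Continuity of $Sx$ in $t$ then follows from Proposition~\ref{pro-conti} together with the continuity of $t \mapsto (\psi(t)-\psi(a))^\beta$, exactly as in Step~3 of the proof of Theorem~\ref{thm-exi}.

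Next I would derive the contraction estimate. Given $x,y \in C[a,b]$, combining Assumptions $(\mathcal{A}4)$, $(\mathcal{A}5)$ and the pointwise bound $|G(\tau,t)| \le G_{\max}$ from Proposition~\ref{pro-max} gives, uniformly in $t \in [a,b]$ (using $(\psi(t)-\psi(a))/(\psi(b)-\psi(a)) \le 1$),
\[
|Sx(t)-Sy(t)| \le |\gf(x)-\gf(y)| + G_{\max}\int_a^b \psi'(\tau)\kappa_3(\tau)\,|x(\tau)-y(\tau)|\ud \tau.
\]
Taking the supremum over $t$ and invoking $|\gf(x)-\gf(y)| \le \kappa_4\dnorm{x-y}$ yields
\[
\dnorm{Sx-Sy} \le \bigl(\kappa_4 + G_{\max}\dnorm{\psi'(\cdot)\kappa_3(\cdot)}_{L^1(a,b)}\bigr)\dnorm{x-y},
\]
and the hypothesis of the theorem forces the prefactor to be strictly less than one. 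Since $(C[a,b],\dnorm{\cdot})$ is a Banach space, the Banach fixed point theorem produces a unique $x \in C[a,b]$ with $Sx=x$, which is precisely the unique mild solution of \eqref{main}--\eqref{BV}.

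The main subtlety I anticipate is keeping the $L^1$ framework clean in the presence of potentially singular kernels $\kappa_3$ (the same phenomenon highlighted in Remark~\ref{rem-exi}): one must use $(\mathcal{A}4)$ as a growth bound via $|f(\tau,x)| \le |f(\tau,0)|+\kappa_3(\tau)|x|$ to justify well-definedness of $Sx$ before turning to the contraction step, which is exactly why the hypothesis $\psi'(\cdot)f(\cdot,0) \in L^1(a,b)$ appears. Everything else reduces to a direct application of the contraction principle, and the $\psi$-dependent estimates from Section~3 slot in without modification.
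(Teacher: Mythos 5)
Your proposal is correct and follows essentially the same route as the paper: the author likewise uses the bound $|f(t,x)|\le \kappa_3(t)|x|+|f(t,0)|$ from Assumption $(\mathcal{A}4)$ together with the argument of Step 3 of Theorem \ref{thm-exi} to check that $S$ is well-defined, then derives the identical contraction estimate $\dnorm{Sx-Sy}\le \bigl(\kappa_4+G_{\max}\dnorm{\psi'(\cdot)\kappa_3(\cdot)}_{L^1(a,b)}\bigr)\dnorm{x-y}$ and concludes via the Banach fixed point theorem. No substantive differences.
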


\begin{proof}
Let us consider the operator $S$ defined in (\ref{S-df}).
We can find from Assumption $(\mathcal{A} 4)$ that $|f(t, x)| \le \kappa_3(t)|x|+|f(t, 0)|$.
Therefore, we can use the same method to that used to prove {\it Step 3} 
in the proof of Theorem \ref{thm-exi} to verify that $S$ is well-defined.
\\
Next, we show that $S$ is a contraction.
Indeed, for $x, y \in C[a, b]$, 
using Assumptions $(\mathcal{A} 4)$ and $(\mathcal{A} 5)$,
similar to {\it Step 1} in the proof of Theorem \ref{thm-exi},
we obtain 
\begin{align*}
\dnorm{Sx-Sy}
& \le 
|\gf(x)-\gf(y)| 
+
G_{\max} \dnorm{\psi'(\cdot) \kappa_3(\cdot)}_{L^1(a, b)}
\dnorm{x-y}
\\
& \le 
\left(\kappa_4+G_{\max} \dnorm{\psi'(\cdot) \kappa_3(\cdot)}_{L^1(a, b)}\right)
\dnorm{x-y}
.
\end{align*}
This shows that $S$ is a contraction 
due to $\kappa_4+ G_{\max}\dnorm{\psi'(\cdot) \kappa_3(\cdot)}_{L^1(a, b)} <1$.
Hence, we conclude that $S$ has a unique fixed point in $C[a, b]$,
which is the mild solution of problem (\ref{main}) and (\ref{BV}).
This completes the proof of Theorem. 
\end{proof}

\begin{remark}
If 
$
k_3(t)
=
k_3 (\psi(b)-\psi(t))^{-\gamma_3}
(\psi(t)-\psi(a))^{-\mu_3}
$
and 
$
|f(t, 0)|
\le 
k_4 (\psi(b)-\psi(t))^{-\gamma_4}
(\psi(t)-\psi(a))^{-\mu_4}
$
for some $k_i>0$ and $\gamma_i, \mu_i<1$ for $i=3,4$,
similar to Remark \ref{rem-exi}, we can verify that 
$\psi'(\cdot) \kappa_3(\cdot) \in L^1(a, b)$ and $\psi'(\cdot) f(\cdot, 0) \in L^1(a, b)$.
Moreover, if $k_3$ sufficiently close to zero, then 
$\kappa_4+ G_{\max}\dnorm{\psi'(\cdot) \kappa_3(\cdot)}_{L^1(a, b)} <1$.
Hence, we can conclude that our result in Theorem \ref{thm-uq} holds for some source functions
including singularities.
\end{remark}

\section*{Acknowledgements}
We would like to thank the handling editor and the anonymous reviewers for their valuable comments and suggestions which helped us to improve the paper.

\end{document}